\theoremstyle{plain}
\newtheorem{theorem}{Theorem}[section]
\newtheorem{corollary}[theorem]{Corollary}
\newtheorem{lemma}[theorem]{Lemma}
\theoremstyle{definition}
\newtheorem{definition}[theorem]{Definition}
\newtheorem{remark}[theorem]{Remark}
\newtheorem{example}[theorem]{Example}
\newtheorem{proposition}[theorem]{Proposition}
\numberwithin{equation}{section}
\newcommand{\A}{\mathcal{A}}
\newcommand{\C}{\mathbb{C}}
\newcommand{\LL}{\mathcal{L}}
\newcommand{\PP}{\mathbb{P}}
\newcommand{\Q}{\mathbb{Q}}
\newcommand{\F}{\mathcal{F}}
\newcommand{\OO}{\mathcal{O}}
\newcommand{\HH}{\mathbb{H}}
\newcommand{\Bl}{\operatorname{\it B\ell}}
\date{\today}
\begin{document}

\title [A vanishing result for twisted cohomology] {A vanishing result for the first twisted cohomology of affine varieties and applications to line arrangements}
%\author{Pauline Bailet, Alexandru Dimca, and Masahiko Yoshinaga}
\author[Pauline Bailet]{Pauline Bailet}
\address{Department of Mathematics \\
Bremen University,  Bremen, Germany} 
\email{pauline.bailet@uni-bremen.de}
\author[Alexandru Dimca]{Alexandru Dimca}
\address{Universit\'e C\^ ote d'Azur, CNRS, LJAD, France }
\email{dimca@unice.fr}

\author[Masahiko Yoshinaga]{Masahiko Yoshinaga}
\address{Hokkaido University, Sapporo, 060-0810, JAPAN}
\email{yoshinaga@math.sci.hokudai.ac.jp}

\subjclass[2010]{Primary 32S22; Secondary  14F17, 14R05, 32S40, 32S55}

\keywords{affine surfaces; line arrangements; local systems; twisted cohomology}

\begin{abstract} 
A general vanishing result for the first cohomology group of affine smooth complex varieties
with values in rank one local systems is established. This is applied to the determination of the monodromy action on the first cohomology group of the Milnor fiber of some line arrangements,
including the monomial arrangement and the exceptional reflection arrangement of type $G_{31}$.
\end{abstract}

\maketitle

\section{Introduction}

In this paper we prove a general vanishing result for the first cohomology group of affine smooth complex varieties
with values in some rank one local systems, see Theorem \ref{thm:vanish}. This result can be regarded as a generalization of the main result in \cite{cdo}. To prove this result, and especially to use it in concrete cases, one needs a new ingredient, namely  characterizations of affine smooth surfaces such as  Nakai-Moishezon criterion, recalled in Proposition \ref{thmNM},
and Nagata's Theorem, recalled in Proposition \ref{Nagata}. These results on affine surfaces are given in section 2, as well as some related results, e.g. Proposition \ref{propconn}.

The main application motivating Theorem \ref{thm:vanish} is the study of the cohomology of line arrangement complements $M(\A)$ with values in some rank one local systems, in particular the study of  Milnor fiber monodromy
of such arrangements $\A$, see \cite[Chapter 5]{DHA}. In section 3 we introduce the basic notation and prove Proposition \ref{keyprop} which is a useful tool in proving that some non-proper surfaces constructed by blowing-ups from a line arrangement $\A$ in the projective plane $\PP^2$ are affine. Example \ref{Ziegler} shows that such affinity questions are rather subtle, and the answer is not determined by the combinatorics, i.e. by the intersection lattice $L(\A)$.

The vanishing result is proved in section 4 and the main consequence for line arrangements is given in Proposition \ref{prop:general}, which says roughly that if there is a line in the arrangement $\A$ containing just one bad point (i.e. a point in $T_{=1}$ in the notation from section 3)
of the rank one local system $\LL$, then $H^1(M(\A),\LL)=0$ if some very mild extra condition holds. This result was known for line arrangements defined over the real numbers without this mild extra condition,
see \cite{yos-mil, yos-locsys}. A similar result is also known for certain monodromy eigenspaces of the Milnor fiber of a complex line arrangement, see \cite{ba-yo}. All these results, as well as \cite{cdo}, were motivated by Libgober's result in \cite{Li} saying that for the local systems $\LL$ related to the Milnor fiber monodromy, one has such a vanishing as soon as there is a line in the arrangement $\A$ containing no bad point for $\LL$.

We obtain as a direct application of Proposition \ref{prop:general} and using previous results by M\u acinic, Papadima and Popescu in \cite{MPP} a complete description on the Milnor fiber monodromy of the 
monomial arrangements a.k.a. the Ceva arrangements 
$$\A(m,m,3): (x^m-y^m)(y^m-z^m)(x^m-z^m)=0,$$
for $m \geq 3$, see Corollary \ref{cor:Ceva}. This result was previously established in \cite{Dreflection}, using completely different techniques, namely residues of rational differential forms with poles along the line arrangement $\A$.
In the final section we apply the same approach to the exceptional  reflection arrangement $\A(G_{31})$, consisting of 60 planes in $\PP^3$.
The result stated in Proposition \ref{G31} was first established in \cite{DStG31} using again different techniques,  based on a description of the basis of the Jacobian syzygies for the defining equation  of the  arrangement $\A(G_{31})$, given in \cite[Appendix B]{OT} and going back to the work of Maschke in the 19th century on invariants of reflection groups. Our new proof here is based on Theorem \ref{thm:vanish} applied to a generic plane section $\A= H \cap \A(G_{31}) $ of the plane arrangement $\A(G_{31})$ in $\PP^3$. However,
this new proof requires some computer aided computations (though less complex than those needed for the approach in \cite{DStG31}) to decide the position of the 30 points of multiplicity 6 in $\A$ with respect to curves in $H=\PP^2$ of degree $d=1,2,3$ and $4$. The corresponding code in SINGULAR was written for us by Gabriel Sticlaru and is available on request. The fact that the monomial arrangements $\A(m,m,3)$ for $m \geq 3$ and the exceptional  reflection arrangement $\A(G_{31})$ can be treated in a uniform way shows in our opinion the power of this new approach based on the vanishing given by Theorem \ref{thm:vanish}.

\bigskip

We thank Lucian B\u adescu for useful discussions and Gabriel Sticlaru for his help with the SINGULAR code. Pauline Bailet is supported by the University of Bremen and the European Union FP7 COFUND under grant agreement n\textsuperscript{o} 600411. Masahiko Yoshinaga is partially supported by JSPS KAKENHI Grant Number 15KK0144, 16K13741, and Humboldt Foundation.

\section{Affine open subsets of projective smooth surfaces}

In this section, let $S$ be a complex projective smooth surface, $D \subset S$ a reduced divisor and $U=S \smallsetminus D$ the corresponding complement. We are interested in deciding whether $U$ is an affine surface.
We recall first the following result, see \cite[Theorem  V.1.10]{Har} or \cite[Theorem  1.2.6]{Mats}.
\begin{proposition}
(Nakai-Moishezon criterion) 
\label{thmNM}
Let $L$ be a line bundle on $S$. 
Then $L$ is ample if and only if $L^2>0$ and $L\cdot C>0$ for all irreducible 
curves $C\subset S$. 
\end{proposition}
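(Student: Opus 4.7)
The plan is to prove the two implications separately, with the forward direction being straightforward and the converse requiring the bulk of the work.

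For the direction ($\Rightarrow$), I would use the definition of ampleness: if $L$ is ample then $L^{\otimes n}$ is very ample for some $n\ge 1$, inducing a closed embedding $\varphi\colon S \hookrightarrow \PP^N$ with $\varphi^{*}\OO_{\PP^N}(1)\cong L^{\otimes n}$. Then $nL\cdot C=\deg\varphi(C)>0$ for every irreducible curve $C\subset S$, so $L\cdot C>0$; likewise $(nL)^{2}=\deg\varphi(S)>0$ yields $L^{2}>0$.

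The converse ($\Leftarrow$) is the heart of the theorem. The central computational tool is Riemann--Roch on the surface $S$, which gives
$$\chi(\OO_S(nL))=\chi(\OO_S)+\tfrac{1}{2}\,nL\cdot(nL-K_S)=\tfrac{n^{2}}{2}L^{2}-\tfrac{n}{2}\,L\cdot K_S+\chi(\OO_S).$$
Since $L^{2}>0$, this grows quadratically in $n$. To convert this into a lower bound for $h^{0}(nL)$, one controls $h^{2}(nL)=h^{0}(K_S-nL)$ via Serre duality: an effective $D\in|K_S-nL|$ would satisfy $D\cdot L=K_S\cdot L-nL^{2}<0$ for $n$ large, contradicting $D\cdot L=\sum a_i L\cdot C_i\ge 0$, which is forced by the hypothesis $L\cdot C_i>0$. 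Hence $h^{0}(n_{0}L)$ is large for some fixed $n_{0}\gg 0$ and we may choose an effective divisor $D_{0}\in|n_{0}L|$.

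The main obstacle is the passage from existence of sections to ampleness. My plan is to feed the short exact sequence
$$0\longrightarrow \OO_S((m-n_0)L)\longrightarrow \OO_S(mL)\longrightarrow \OO_{D_0}(mL)\longrightarrow 0$$
into a bootstrap. On each irreducible component $C$ of $D_0$, the restriction $L|_C$ has strictly positive degree, so by the Riemann--Roch theorem on the curve $C$ it is ample and satisfies $H^{1}(C,L^{m}|_C)=0$ for $m\gg 0$; handling the possibly non-reduced structure of $D_0$ by filtering it by its reduced subschemes extends this to $H^{1}(D_0,\OO_{D_0}(mL))=0$. The resulting long exact sequence produces a non-increasing chain of integers $h^{1}(S,\OO_S(mL))$, which therefore stabilizes; combined with the surjections of $H^{0}$ obtained from the same sequence one deduces both that $H^{1}(S,\OO_S(mL))=0$ and that sections of $\OO_{D_0}(mL)$ lift to $S$. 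Finally, since $L|_{D_0}$ is ample, sections on $D_0$ separate points of $D_0$ and their tangent vectors, while sections of $\OO_S(mL)$ vanishing along $D_0$ separate points of $S\smallsetminus D_0$ from the rest; together these verify Serre's criterion that $|mL|$ is very ample, establishing ampleness of $L$.
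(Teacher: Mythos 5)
First, a point of context: the paper does not prove this proposition at all --- it is quoted as a classical result with references to Hartshorne (Theorem V.1.10) and Matsuki (Theorem 1.2.6), so there is no internal proof to compare against. Your sketch follows the standard textbook argument from exactly those sources: forward direction via very ampleness of a power, converse via Riemann--Roch plus Serre duality to produce an effective $D_0\in|n_0L|$, then the restriction sequence to $D_0$ and the stabilization of $h^1(S,\OO_S(mL))$ to lift sections from $D_0$. Up to that point the outline is sound (one small overclaim: the stabilization argument gives surjectivity of $H^0(S,\OO_S(mL))\to H^0(D_0,\OO_{D_0}(mL))$ because the stabilized map on $H^1$ is an isomorphism, forcing the connecting map to vanish; it does \emph{not} give $H^1(S,\OO_S(mL))=0$, which is a consequence of ampleness rather than a step toward it --- but you do not actually need that vanishing).

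The genuine gap is in your final step. You cannot verify very ampleness the way you describe: sections of $\OO_S(mL)$ that vanish along $D_0$ are useless for separating two points that \emph{both} lie in $S\smallsetminus D_0$, or for separating tangent directions at such a point, and the lifted sections from $D_0$ only see the subscheme $D_0$ (and in any case do not control normal directions to $D_0$ in $S$). Trying to separate points off $D_0$ by sections of $\OO_S((m-n_0)L)$ multiplied by the canonical section of $D_0$ is circular. The standard way to close the argument is weaker but sufficient: the surjectivity of restriction plus the section cutting out $D_0$ show that $\OO_S(mL)$ is globally generated for $m\gg0$, hence defines a morphism $\varphi\colon S\to\PP^N$ with $\varphi^*\OO(1)\cong\OO_S(mL)$; since $mL\cdot C>0$ for every irreducible curve $C$, no curve is contracted, so $\varphi$ is quasi-finite and proper, hence finite; and the pullback of an ample line bundle under a finite morphism is ample. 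This yields ampleness of $mL$, hence of $L$, without ever proving very ampleness directly. As written, your concluding paragraph would not go through.
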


Let $D=\sum_{i=1}^n D_i$ be an irreducible decomposition of the divisor 
$D$ on $S$. The complement  $U$ is affine if and only if it exists  an effective ample divisor
$\widetilde{D}=\sum_{i=1}^n a_i D_i$ ($a_i>0$) 
see \cite[Theorem 2]{Good}. Here and in the sequel we consider divisors with $\Q$-coefficients.
If we want to apply the Nakai-Moishezon criterion to the line bundle $L= \OO(\widetilde{D})$
in the case when the coefficients $a_i$ are integers, we need to check in particular the inequalities
\begin{equation}
\label{eq:1}
D_i\cdot\widetilde{D}>0 \text{ for all }i=1, \dots n.
\end{equation}
\begin{definition}
The divisor $D$ is said to support a \emph{NM divisor} if there exists 
an effective divisor $\widetilde{D}=\sum_{i=1}^n a_i D_i$ ($a_i>0$) 
such that the inequalities \eqref{eq:1} hold. 
\end{definition}

\begin{lemma}
\label{lem:weak}
Let $I\subset [n]=\{1, 2, \dots, n\}$ be a nonempty subset. 
Suppose that $D':=\sum_{i\in I}D_i$ supports an NM divisor, 
and for each $i\in [n]\smallsetminus I$, $D_i\cap D'\neq\emptyset$. 
Then $D$ supports an NM divisor. 
\end{lemma}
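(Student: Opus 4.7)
The plan is to build $\widetilde{D}$ by perturbing the given NM divisor $\widetilde{D'}=\sum_{i\in I} a_i D_i$ with a small amount of the remaining components. Concretely, I would set
\[
\widetilde{D} \;=\; \widetilde{D'} \;+\; \varepsilon \sum_{i\in [n]\smallsetminus I} D_i
\]
for a sufficiently small rational number $\varepsilon>0$ to be chosen at the end. This is an effective $\Q$-divisor with all coefficients strictly positive, so it remains to verify the inequalities \eqref{eq:1} for $\widetilde{D}$.

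The key intersection-theoretic fact I will use repeatedly is that, on the smooth projective surface $S$, two distinct irreducible curves intersect properly, hence $D_j\cdot D_i\ge 0$ for all $i\ne j$, with $D_j\cdot D_i\ge 1$ whenever $D_j\cap D_i\ne\emptyset$. For an index $j\in I$, this immediately gives
\[
D_j\cdot \widetilde{D} \;=\; D_j\cdot \widetilde{D'} \;+\; \varepsilon\sum_{i\in [n]\smallsetminus I} D_j\cdot D_i \;\ge\; D_j\cdot \widetilde{D'} \;>\; 0
\]
for every $\varepsilon>0$, since each summand in the correction is nonnegative and $D_j\cdot\widetilde{D'}>0$ by hypothesis.

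For an index $j\in [n]\smallsetminus I$, I would first observe that $D_j\cdot\widetilde{D'}>0$: the assumption $D_j\cap D'\ne\emptyset$ produces some $i\in I$ with $D_j\cap D_i\ne\emptyset$, so $D_j\cdot D_i\ge 1$ and therefore $D_j\cdot\widetilde{D'}\ge a_i>0$, the other contributions $a_k D_j\cdot D_k$ for $k\in I$ being nonnegative. Expanding,
\[
D_j\cdot\widetilde{D} \;=\; D_j\cdot\widetilde{D'} \;+\; \varepsilon D_j^{\,2} \;+\; \varepsilon\!\!\sum_{\substack{i\in [n]\smallsetminus I\\ i\ne j}} D_j\cdot D_i.
\]
The last sum is nonnegative, so the only potential danger is the self-intersection term $\varepsilon D_j^{\,2}$, which may be negative. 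Since $[n]\smallsetminus I$ is finite, I can choose a single
\[
\varepsilon \;<\; \min_{j\in [n]\smallsetminus I} \frac{D_j\cdot\widetilde{D'}}{1+|D_j^{\,2}|}
\]
so that $D_j\cdot\widetilde{D}>0$ simultaneously for all $j\in [n]\smallsetminus I$.

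The main obstacle is precisely this possibly negative self-intersection $D_j^{\,2}$ for $j\notin I$: it prevents the naive choice $\widetilde{D}=\widetilde{D'}+\sum_{i\notin I} D_i$ (with coefficient $1$) from working in general, which is why one must allow $\Q$-coefficients and tune $\varepsilon$ small. Everything else in the argument is a direct book-keeping consequence of the nonnegativity of intersection of distinct irreducible curves.
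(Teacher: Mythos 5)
Your proposal is correct and is exactly the paper's construction: the authors take $\widetilde{D}=\widetilde{D}'+\varepsilon\sum_{i\in[n]\smallsetminus I}D_i$ with $0<\varepsilon\ll 1$ and leave the verification to the reader, which you have carried out correctly (in particular the key points that $D_j\cdot\widetilde{D}'>0$ for $j\notin I$ via the hypothesis $D_j\cap D'\neq\emptyset$, and that the only negative contribution $\varepsilon D_j^2$ is absorbed by shrinking $\varepsilon$).
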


\begin{proof}
Suppose $\widetilde{D}'=\sum_{i\in I}a_i D_i$ satisfies the condition. 
Then 
$\widetilde{D}=\widetilde{D}'+\varepsilon\sum_{i\in [n]\smallsetminus I}D_i$ 
satisfies the condition for $0<\varepsilon\ll 1$. 
\end{proof}

\begin{corollary}
\label{cor:connect}
Suppose that the divisor $D$ is connected and $D':=\sum_{i\in I}D_i$ supports 
an NM divisor for some nonempty subset $I\subset [n]$. Then 
$D$ supports an NM divisor. 
\end{corollary}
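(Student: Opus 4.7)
The plan is to iterate Lemma \ref{lem:weak} via a breadth-first expansion of $I$ using the connectedness of $D$. Starting from $I_0 := I$, inductively define
$$
I_{k+1} := I_k \cup \{j \in [n]\smallsetminus I_k \mid D_j \cap D_{I_k} \neq \emptyset\}, \qquad D_{I_k} := \sum_{i\in I_k} D_i.
$$
Since $D$ has only finitely many irreducible components and is connected, the ascending chain $I_0 \subseteq I_1 \subseteq \cdots$ strictly grows whenever $I_k \neq [n]$: if $I_k$ were a proper subset with $I_{k+1}=I_k$, then the divisors $D_{I_k}$ and $\sum_{i\notin I_k} D_i$ would be disjoint, contradicting connectedness. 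Hence the chain stabilizes at $[n]$ after some finite number of steps $r$.

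I would then prove by induction on $k$ that $D_{I_k}$ supports an NM divisor (interpreted with respect to its own irreducible decomposition, i.e.\ that there exist $a_i>0$ for $i\in I_k$ with $D_j\cdot \sum_{i\in I_k} a_iD_i>0$ for every $j\in I_k$). The base case $k=0$ is the hypothesis. For the inductive step, I would apply the argument of Lemma \ref{lem:weak} \emph{in the relative setting}, with $D_{I_{k+1}}$ playing the role of the ambient divisor and $I_k\subset I_{k+1}$ playing the role of the distinguished subset: by the very definition of $I_{k+1}$, every $D_j$ with $j\in I_{k+1}\smallsetminus I_k$ meets $D_{I_k}$, which is exactly the geometric hypothesis the lemma needs. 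Concretely, given $\widetilde{D}_{I_k}=\sum_{i\in I_k}a_iD_i$ satisfying the NM inequalities, one sets
$$
\widetilde{D}_{I_{k+1}} := \widetilde{D}_{I_k} + \varepsilon \sum_{j\in I_{k+1}\smallsetminus I_k} D_j
$$
for sufficiently small $\varepsilon>0$, and checks: for $i\in I_k$ the intersection $D_i\cdot\widetilde{D}_{I_{k+1}}$ differs from the strictly positive $D_i\cdot\widetilde{D}_{I_k}$ by an $O(\varepsilon)$-term; for $j\in I_{k+1}\smallsetminus I_k$ one has $D_j\cdot \widetilde{D}_{I_k}=\sum_{i\in I_k}a_i(D_j\cdot D_i)>0$ because the intersection numbers of distinct irreducible curves on the smooth surface $S$ are non-negative and at least one summand is strictly positive by the BFS hypothesis. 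Since the index set is finite, a single $\varepsilon$ works for all inequalities simultaneously. Applying this at $k=r$ gives the desired NM divisor for $D=D_{[n]}$.

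The main obstacle is the mild conceptual point of using Lemma \ref{lem:weak}'s argument relatively — the lemma is stated for the fixed ambient divisor $D=\sum_{i=1}^n D_i$, but its proof is entirely local and transfers unchanged to any subdivisor $D_{I_{k+1}}$. After noticing this, the only remaining care is to choose $\varepsilon$ small enough to dominate the possibly very negative self-intersection contributions $D_j^2$ of the newly added components; finiteness of $I_{k+1}\smallsetminus I_k$ makes this automatic.
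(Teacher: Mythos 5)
Your proposal is correct and follows essentially the same route as the paper: the paper's proof also iterates Lemma \ref{lem:weak} on an increasing chain of index sets obtained from the connectedness of $D$ (it adds the components of $[n]\smallsetminus I$ one at a time in a suitable order rather than in breadth-first layers, but this is an immaterial difference, and the relative use of the lemma on subdivisors that you flag is implicit there as well). The verification of the inequalities for the perturbed divisor $\widetilde{D}_{I_k}+\varepsilon\sum D_j$ is exactly the content of the lemma's proof, so nothing further is needed.
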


\begin{proof}
By the assumption, we can choose an ordering $[n]\smallsetminus I
=\{i_1, i_2, \dots, i_k\}$ such that 
$D_{i_\alpha}\cap (D'+D_{i_1}+\dots+D_{i_{\alpha-1}})\neq\emptyset$. 
Apply Lemma \ref{lem:weak} inductively. 
\end{proof}

Note also the following result by Nagata, see for instance  \cite[Corollary 3.3]{Har0}, which in our case can be applied when $U'=S\smallsetminus D'$ is affine.

\begin{proposition} \label{Nagata}
(Nagata's Theorem) Let $U'$ be a complex affine smooth surface and $Y$ be a pure 1-codimension closed subset of $U'$. Then $U=U' \smallsetminus Y$ is also affine.
\end{proposition}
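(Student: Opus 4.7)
The plan is to reduce this to the Nakai--Moishezon/Goodman setup recalled earlier in the section. First I would compactify $U'$: choose a smooth projective surface $S$ containing $U'$ as an open dense subset with $B:=S\smallsetminus U'$ a reduced divisor, $B=\sum_{i=1}^n B_i$. Such an $S$ exists by Nagata's compactification combined with Hironaka resolution of singularities, and the pure codimension of $B$ is automatic from the affineness of $U'$. Applying Goodman's theorem (quoted just after Proposition \ref{thmNM}) to $U'\subset S$ produces an effective ample $\Q$-divisor $\widetilde{B}=\sum_i a_iB_i$ with all $a_i>0$. Next, let $\overline{Y}=\sum_j Y_j\subset S$ be the closure of $Y$; since $Y$ is pure of codimension one in $U'$ and lies in $U'$, each $Y_j$ is a curve in $S$ not contained in $B$. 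Then $U=S\smallsetminus(B\cup\overline{Y})$, so by a second application of Goodman it suffices to exhibit an effective ample $\Q$-divisor on $S$ with support $B\cup\overline{Y}$.

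My candidate is $\widetilde{D}:=\widetilde{B}+\varepsilon\,\overline{Y}$ for a small rational $\varepsilon>0$; its support is exactly $B\cup\overline{Y}$. I verify Nakai--Moishezon (Proposition \ref{thmNM}) directly. Expanding,
\[
\widetilde{D}^2=\widetilde{B}^2+2\varepsilon\,\widetilde{B}\cdot\overline{Y}+\varepsilon^2\overline{Y}^2
\]
is positive for $\varepsilon$ small, since $\widetilde{B}^2>0$. For any irreducible curve $C\subset S$,
\[
\widetilde{D}\cdot C=\widetilde{B}\cdot C+\varepsilon\sum_j Y_j\cdot C;
\]
ampleness of $\widetilde{B}$ forces $\widetilde{B}\cdot C>0$, and if $C$ is not one of the $Y_j$ then $\sum_j Y_j\cdot C\geq 0$, so positivity is immediate. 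When $C=Y_{j_0}$ the term $Y_{j_0}^2$ may be negative, but only finitely many components $Y_{j_0}$ occur, so a uniformly small $\varepsilon$ keeps $\widetilde{D}\cdot Y_{j_0}>0$ for all of them. Thus $\widetilde{D}$ is ample with support $B\cup\overline{Y}$, and $U$ is affine by the other direction of Goodman.

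The real obstacle is the compactification step; once a smooth projective $S$ with $B$ a divisor is secured, the remainder is a straightforward perturbation argument expressing the openness of the ample cone. In spirit, this is a surface-interior version of Lemma \ref{lem:weak}: one enlarges the support of an already-ample divisor by adding small positive multiples of the additional components one wishes to remove, using that each new intersection number $\widetilde{B}\cdot Y_{j_0}$ is strictly positive to absorb the possibly negative self-intersection $Y_{j_0}^2$.
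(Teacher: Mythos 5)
Your argument is essentially sound, but there is nothing in the paper to compare it against: Proposition \ref{Nagata} is stated as a quoted result, with a pointer to \cite[Corollary 3.3]{Har0}, and no proof is given. The reference proves the statement in arbitrary dimension by cohomological methods (Serre's affineness criterion and local cohomology), which is why the paper can simply cite it; your proof is instead a self-contained surface argument built from the toolkit of Section 2 (Goodman's theorem plus Nakai--Moishezon), and it is correct in that setting, which is all the paper actually uses. The verification that $\widetilde{B}+\varepsilon\,\overline{Y}$ remains ample is fine as written: for $C$ not among the $Y_j$ you have $\widetilde{D}\cdot C\geq \widetilde{B}\cdot C>0$ with no uniformity needed, and only the finitely many $Y_{j_0}$ with $Y_{j_0}^2<0$ constrain $\varepsilon$; this is indeed the same perturbation mechanism as Lemma \ref{lem:weak}, and since the paper works with $\Q$-coefficients one clears denominators before invoking Proposition \ref{thmNM}. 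The one point to flag is your appeal to the ``only if'' direction of Goodman's theorem: in Goodman's original formulation the effective ample divisor supported on the boundary is only guaranteed after a further proper modification over the boundary, and the clean iff quoted in Section 2 is really the direction the paper needs (ample support implies affine). Your argument is robust to this --- replacing $S$ by the modification, which is an isomorphism over $U'$ and hence leaves $Y$ and its closure untouched, the perturbation goes through verbatim --- but you should say so, or else cite the surface-specific form of Goodman's result explicitly. What your approach buys is a proof entirely internal to the paper's framework; what the citation buys is generality (all dimensions, no smoothness or characteristic-zero hypotheses) at the cost of importing machinery the paper does not otherwise use.
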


We have also the following result, perhaps well known to the specialists. As we were not able to locate a reference, we include the short proof. Example \ref{Ziegler} below shows that the converse of the last claim fails.

\begin{proposition} \label{propconn}

Let $S$ be a connected projective smooth surface, $D$ a divisor on $S$ and $U=S\smallsetminus D$. Then 

\begin{enumerate}
\item 
$H^4(U)=0$. 
\item 
Suppose $b_1(S)=0$. Then $H^3(U)=0$ if and only if
the divisor  $D$ is connected.
\item 
If $U$ is affine, then $H^3(U)=0$. 
\end{enumerate}

\end{proposition}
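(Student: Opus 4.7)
The plan is to treat the three parts with standard tools from the cohomology of complex manifolds, using the long exact sequence of the pair $(S,U)$ combined with Lefschetz duality.

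\textbf{Part (1).} Assuming $D \neq \emptyset$, the complement $U$ is a non-compact connected oriented real 4-manifold, hence $H^4(U)=0$ by the standard fact that the top cohomology of a non-compact oriented manifold vanishes. (If $D = \emptyset$, the proposition is either vacuous or trivial.)

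\textbf{Part (2).} I would write down the long exact sequence of the pair $(S,U)$,
\[
\cdots \to H^3(S) \to H^3(U) \to H^4(S,U) \to H^4(S) \to H^4(U) \to 0,
\]
and use Lefschetz duality $H^k(S,U) \cong H_{4-k}(D)$ (valid since $S$ is compact oriented of real dimension $4$ and $D$ is a compact subspace). Under this isomorphism, the connecting map $H^4(S,U) \to H^4(S)$ becomes the pushforward $H_0(D) \to H_0(S) \cong \Z$ induced by the inclusion $D \hookrightarrow S$, which is surjective (summation over components) with kernel of rank $c(D) - 1$, where $c(D)$ denotes the number of connected components of $D$. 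Since $b_1(S) = 0$ implies $b_3(S) = 0$ by Poincar\'e duality, the term $H^3(S)$ vanishes, and the sequence yields
\[
H^3(U) \cong \ker\bigl( H_0(D) \to H_0(S) \bigr) \cong \Z^{c(D)-1}.
\]
Hence $H^3(U)=0$ if and only if $c(D)=1$, i.e.\ $D$ is connected.

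\textbf{Part (3).} This is immediate from the Andreotti--Frankel theorem: a smooth complex affine variety of complex dimension $n$ has the homotopy type of a CW complex of real dimension at most $n$. Applied to the affine surface $U$ ($n=2$), this yields $H^k(U) = 0$ for all $k \geq 3$, in particular $H^3(U)=0$.

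\textbf{Main obstacle.} There is no substantial obstacle; the argument is a direct unwinding of duality and the long exact sequence. The only point requiring care is the identification of the map $H^4(S,U) \to H^4(S)$ with the inclusion-induced map $H_0(D) \to H_0(S)$ under Lefschetz duality, which is standard but should be checked to make the component count come out correctly.
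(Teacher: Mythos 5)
Your proposal is correct and is essentially the paper's argument: for (1) the paper likewise observes that $U$ is a non-compact 4-manifold, for (2) it runs the exact sequence $H^0_c(U)\to H^0(S)\to H^0(D)\to H^1_c(U)\to H^1(S)=0$ (the compact-support, Poincar\'e-dual form of your pair-sequence-plus-Lefschetz-duality computation of $H^3(U)$ as $\ker\bigl(H_0(D)\to H_0(S)\bigr)$), and for (3) it cites the Artin--Hamm vanishing where you cite Andreotti--Frankel --- both standard references for the same fact. The only point worth flagging is that with integer coefficients your step ``$b_1(S)=0$ implies $b_3(S)=0$, hence $H^3(S)$ vanishes'' overlooks possible torsion in $H^3(S;\Z)\cong H_1(S;\Z)$ (whereas $H^1(S;\Z)$, used by the paper, is automatically torsion-free); with $\Q$- or $\C$-coefficients, which is evidently the intended setting, this is immaterial.
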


\proof The first claim follows since $U$ is a non-compact 4-manifold.
One has the following exact sequence in cohomology with compact coefficients
$$ H^0_c(U) \to H^0(S) \to H^0(D) \to H^1_c(U) \to 0=H^1(S),$$
where the vanishing comes from $b_1(S)=0$. This proves the second claim,
since $S$ is clearly connected. The last claim is well known, e.g. as a special case of results by H. Hamm and/or M. Artin, see for instance \cite[Corollary 5.2.19]{SIT} .

\endproof

\section{Line arrangements and local systems}

Let $\A=\{H_1, \dots, H_n\}$ be a line arrangement on 
the complex projective plane $\PP^2$. 
To avoid trivial exceptions, 
we assume that $\A$ is not of pencil type (equivalently, 
each $H_i$ has at least two intersections of $\A$ on it). 

Let $\LL$ be a rank one local system on the complement 
$M(\A)=\PP^2\smallsetminus\bigcup_{i=1}^n H_i$. 
The isomorphism class of the local system $\LL$ is 
determined by the monodromy $t_i\in\C^\times$ around $H_i$. 
We assume $t_i\neq 1$ for all $i=1, \dots, n$ and we 
note that $\prod_{i=1}^n t_i=1$. For a point $p\in\PP^2$, 
denote by $t_p=\prod_{H_i\ni p}t_i$  the corresponding total turn monodromy of $\LL$, see \cite[Section 5.2]{DHA}.

Let $T\subset\PP^2$ be the set of all intersections of $\A$ with 
multiplicity $\geq 3$. The set $T$ is decomposed into 
$T=T_{\neq 1}(\LL) \sqcup T_{=1}(\LL)$ according to 
%$t_p=\prod_{H_i\ni p}t_i$ is 
$t_p\neq 1$ or $t_p=1$. To keep the notation simple, we set $T_{\neq 1}=T_{\neq 1}(\LL)$ and
$T_{=1}=T_{=1}(\LL)$, but keep in mind that these sets depend on the local system $\LL$.

Denote the set of all double points by $P\subset\PP^2$, and set 
$k_i:=\# H_i\cap T$, 
$k'_i:=\# H_i\cap T_{\neq 1}$, and 
$d_i:=\# H_i\cap P$. 
%, 
%the number of points to blow-up on $H_i$,  
%the number of intersections on $H_i$ 
%such that the multiplicity is $\geq 3$ and $t_q\neq 1$, and 
%the number of double points on $H_i$, respectively. 

Let $S=\Bl_T\PP^2$ be the surface obtained by blowing up the 
points in $T$. We denote the exceptional divisor $E_p$ for $p\in T$. 
We also denote by $\overline{H}_i\subset S$ the strict transform of the line
$H_i$. 

We consider the following divisor on $S$: 
\[
D=\sum_{i=1}^n\overline{H}_i+\sum_{p\in T_{\neq 1}}E_p. 
\]

Note that $D$ is normal crossing, and is the sum of all components 
(of the total transform of $\A$) with nontrivial monodromy around it. 

\begin{proposition} \label{propNM}

(1) 
$D\cdot \overline{H}_i=1-k_i+k'_i+d_i$. 

(2) $D\cdot E_p=-1+n_p\geq 2$, for $p\in T_{\neq 1}$, 
where $n_p$ is the multiplicity of $p$ in $\A$. 
\end{proposition}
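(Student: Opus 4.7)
The plan is a direct intersection-number calculation on the blowup $S=\Bl_T\PP^2$, using the standard formulas: for $p\in T$ with exceptional divisor $E_p$, one has $E_p^2=-1$, $E_p\cdot E_q=0$ for $p\neq q$, $\overline{H}_i\cdot E_p=1$ if $p\in H_i$ and $0$ otherwise, and $\overline{H}_i^2=H_i^2-k_i=1-k_i$ since blowing up each of the $k_i$ points of $T$ on $H_i$ drops the self-intersection of $H_i$ by $1$.

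For part (1), I would expand
\[
D\cdot\overline{H}_i
=\overline{H}_i^{\,2}
+\sum_{j\neq i}\overline{H}_j\cdot\overline{H}_i
+\sum_{p\in T_{\neq 1}}E_p\cdot\overline{H}_i
\]
and identify the three terms as $1-k_i$, $d_i$, and $k'_i$, respectively. The middle identification uses the fact that for $j\neq i$, the unique point $H_i\cap H_j$ of $\PP^2$ is either a double point of $\A$—in which case $\overline{H}_i$ and $\overline{H}_j$ still meet transversally there and contribute $1$—or a point of $T$, in which case the strict transforms are separated by the blowup and contribute $0$. Summing over $j\neq i$ therefore yields exactly the number $d_i$ of double points on $H_i$. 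The third sum reduces to the count of $p\in T_{\neq 1}$ with $p\in H_i$, which by definition is $k'_i$. Adding gives $D\cdot\overline{H}_i=1-k_i+k'_i+d_i$.

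For part (2), since $E_p$ is disjoint from every other $E_q$, only the terms $\overline{H}_i\cdot E_p$ (one for each of the $n_p$ lines through $p$) and the self-intersection $E_p^{\,2}=-1$ can contribute to $D\cdot E_p$. This gives $D\cdot E_p=n_p-1$. The inequality $n_p-1\geq 2$ is then immediate from the defining condition $n_p\geq 3$ for points of $T$.

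I do not foresee any real obstacle; the only point requiring mild care is to remember that the sum over $p$ in $D$ is restricted to $T_{\neq 1}$, so exceptional divisors $E_p$ over the points of $T_{=1}$ do \emph{not} contribute to $D\cdot\overline{H}_i$. This is precisely why $k'_i$, and not $k_i$, appears in the final term of part (1); and it is also why the term $E_p^{\,2}=-1$ legitimately enters in part (2), since $E_p$ is indeed a summand of $D$ when $p\in T_{\neq 1}$.
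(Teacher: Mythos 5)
Your computation is correct and is exactly the direct intersection-number calculation that the paper dismisses as ``Straightforward'': the identifications $\overline{H}_i^{\,2}=1-k_i$, $\sum_{j\neq i}\overline{H}_j\cdot\overline{H}_i=d_i$, and $\sum_{p\in T_{\neq 1}}E_p\cdot\overline{H}_i=k'_i$ are all as intended, as is $D\cdot E_p=n_p-1\geq 2$ from $n_p\geq 3$. No discrepancy with the paper's (omitted) argument.
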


\begin{proof}
Straightforward. 
\end{proof}

\begin{example}\label{Hirz}
Let $p_1, \dots, p_\ell\in\PP^2$ 
be distinct points on a line $L$ in the plane $\PP^2$. 
Let $\A_i=\{H_{i,1}, \dots, H_{i,c_i}\}$ be a set of lines passing 
through $p_i$ and $\A=\A_1\sqcup\cdots\sqcup\A_\ell$. 
We assume that $L\notin \A_i$ and all intersections 
of $\A$ except for $p_1, \dots, p_\ell$ are double points. 
Suppose $t_{p_1}=\cdots=t_{p_\ell}=1$. 
Then the strict transform $\overline{L}$ does not intersect $D$, $D\cdot L=0$, 
and hence $U=S\smallsetminus D$ is not affine. 

(Note that if $t_i\neq 1$ for all lines, then $H^1(M, \LL)=0$. It is proved 
as follows. Firstly, note that the local system cohomology group $H^1(M, \LL)$ 
is a homotopy invariant. Therefore, $\dim H^1(M, \LL)$ is constant 
during the lattice isotopic deformation \cite{Ra}. By the result in 
\cite{NaYo}, the moduli space of such line arrangements is irreducible, hence 
connected. One can deform it to a real arrangement. Then applying 
\cite[Theorem 3]{yos-locsys}, one obtains $H^1(M, \LL)=0$.) 
\end{example}

%\begin{example}
%We can generalize the above example. Let $L$ be a line, 
%$p_1, \dots, p_k\in L$. Consider pencils of each point $p_i$ and 
%get $\A$ by collecting them together (exclude $L$). 
%\end{example}
The following result controls to a certain extent the existence of bad curves as in the above example.

\begin{proposition} \label{keyprop}

Let $C\subset\PP^2$ be an irreducible curve of degree $d$ such that  its strict transform in $S=\Bl_T\PP^2$ does not meet $D$.
For each point $p \in T_{= 1}$ denote by $m_p \geq 0$ the multiplicity of  $C$ at $p$. Then the following holds.
\begin{enumerate}

\item The intersection $C\cap\left(\bigcup_{H_i\in\A}H_i\right)$ is transverse and produces a subset of $T_{= 1}$. For each $j=1,\dots, n$, one has $\sum_{p \in H_j \cap T_{= 1}}m_p=d$. In particular, if there is a line $H_j$ containing only one point $p$ of $C \cap T_{= 1}$, then $C$ is a line.

\item If $n_p$ is the multiplicity of the point $p$ in $\A$, then $\sum_{p \in T_{= 1}}n_pm_p=nd$.

\item One has the inequality
$$(d-1)(d-2)  \geq  \sum_{p\in C \cap T_{= 1}} (m_p-1)^2.$$

\end{enumerate}

\end{proposition}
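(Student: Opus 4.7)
The plan is to translate the hypothesis that $\overline{C}$ is disjoint from $D$ into intersection-theoretic statements on $S$, then derive the three assertions via Bezout's theorem and the adjunction formula. For (1), I start with two elementary observations. First, any intersection point $q \in C \cap H_i$ lying outside $T$ is unaffected by the blow-up, so $\overline{C}$ and $\overline{H}_i$ still meet at $q$, contradicting $\overline{C} \cap D = \emptyset$; hence $q \in T$. Second, if $C$ passed through a point $p \in T_{\neq 1}$, then the strict transform $\overline{C}$ would meet $E_p \subset D$; so $C$ avoids $T_{\neq 1}$ entirely, and $C \cap (\bigcup_i H_i) \subset T_{=1}$. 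For transversality at such a point $p \in C \cap H_i$ I use the blow-up formula for local intersection numbers,
$$(C \cdot H_i)_p = \mathrm{mult}_p(C) \cdot \mathrm{mult}_p(H_i) + (\overline{C} \cdot \overline{H}_i)_{E_p} = m_p,$$
since $\overline{C}$ avoids $\overline{H}_i$ altogether. Global Bezout along $H_j$ then gives $\sum_{p \in H_j \cap T_{=1}} m_p = d$, with $m_p = 0$ for $p \notin C$. If exactly one $p \in H_j \cap T_{=1}$ lies on $C$, then $m_p = d$; but an irreducible plane curve of degree $d \geq 2$ has multiplicity at most $d-1$ at any point, so $d = 1$.

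Part (2) is immediate from (1): summing over $j = 1, \dots, n$ and interchanging the order of summation gives $nd = \sum_{p \in T_{=1}} \#\{j : p \in H_j\}\, m_p = \sum_{p \in T_{=1}} n_p m_p$.

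For (3), I apply the adjunction formula on $S$ to the strict transform $\overline{C} = \pi^* C - \sum_{p \in T} m_p E_p$. Using $(\pi^* C)^2 = d^2$, $E_p^2 = -1$, and $K_S = \pi^* K_{\PP^2} + \sum_p E_p$, a routine calculation yields
$$2\,p_a(\overline{C}) = (d-1)(d-2) - \sum_{p \in T} m_p(m_p - 1).$$
Since $\overline{C}$ is irreducible and reduced, $p_a(\overline{C}) \geq 0$. As $C$ avoids $T_{\neq 1}$, the sum is effectively over $p \in C \cap T_{=1}$, where $m_p \geq 1$, and the elementary inequality $m_p(m_p - 1) \geq (m_p - 1)^2$ finishes the argument. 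The only genuinely delicate step is the transversality claim of part (1), which rests on careful bookkeeping of local intersection numbers under blow-up; the remainder is standard.
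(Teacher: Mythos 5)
Your argument is correct, and for parts (1) and (2) it is essentially the paper's: the paper also deduces $C\cap\left(\bigcup_i H_i\right)\subset T_{=1}$ from the disjointness of $\overline{C}$ and $D$, establishes that no line through $p$ lies in the tangent cone of $(C,p)$ (phrased as ``the germs would not be separated after one blow-up'', which is the same fact your local intersection-number formula $(C\cdot H_i)_p=m_p$ encodes), and then applies Bezout and the multiplicity bound for irreducible curves. Where you genuinely diverge is part (3). The paper works topologically: it uses $\chi(C)\leq 2$ for an irreducible plane curve together with the formula $\chi(C)=2-(d-1)(d-2)+\sum_q\mu(C,q)$ and the semicontinuity bound $\mu(C,p)\geq (m_p-1)^2$ for a singular point of multiplicity $m_p$. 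You instead compute on $S$ via adjunction, obtaining $2p_a(\overline{C})=(d-1)(d-2)-\sum_p m_p(m_p-1)$ and invoking $p_a(\overline{C})\geq 0$ for an irreducible reduced curve, then using $m_p(m_p-1)\geq (m_p-1)^2$ for $m_p\geq 1$. Both are standard and both yield the stated inequality; your route stays entirely within the intersection theory of the blow-up already set up in the proposition and avoids Milnor numbers, while the paper's route avoids the divisor-class computation of $\overline{C}$ and gives the slightly stronger intermediate bound in terms of $\sum_q\mu(C,q)$ over \emph{all} singular points of $C$, not just those in $T_{=1}$. One small point to keep explicit in your version: the exceptional term $(m_p-1)^2$ exceeds $m_p(m_p-1)$ when $m_p=0$, so it is essential (as you do note) that the final sum is restricted to $p\in C\cap T_{=1}$, where $m_p\geq 1$.
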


\proof To prove the first claim, note that the inclusion
$$C\cap\left(\bigcup_{H_i\in\A}H_i\right) \subset T_{= 1}$$
is obvious. Then
at each point $p\in C$, any line $H_j$ passing through $p$ is not in the tangent cone of the germ $(C,p)$, since otherwise the germs $(C,p)$ and $(H_j,p)$ are not separated after one blow-up.  This implies the first two claims in (1).
Moreover, if there is a line $H_j$ containing only one point $p$ of $T_{= 1}$, then the multiplicity of $C$ at $p$ has to be equal to $d$, which is possible only if $C$ is a line, since $C$ is irreducible.
Using (1), by summation over all the lines $H_j$, one gets (2).

To prove the last claim, note that 
since $C$ is irreducible, one has 
$$\chi(C)=b_0(C)-b_1(C)+b_2(C) \leq 2.$$
 On the other hand one has
$$\chi(C)=2-(d-1)(d-2)+ \sum_q \mu(C,q),$$
where the sum is over all singular points $q$ of $C$. It follows that
$$(d-1)(d-2) \geq  \sum_q \mu(C,q) \geq  \sum_{p\in C \cap T_{= 1}} \mu(C,p) \geq  \sum_{p\in C \cap T_{= 1}} (m_p-1)^2.$$
Indeed, the singularity $(C,p)$ has multiplicity $m_p$ and its Milnor number is at least $(m_p-1)^2$, by the well known semi-continuity property of the Milnor number.

\endproof

\begin{example} \label{Ziegler}
Let $p_1, \dots, p_6\in\PP^2$ be $6$ points such that no three are colinear. 
Let $\A=\{H_1, \dots, H_9\}$ be the edges of the corresponding hexagon and three diagonals (more precisely, the lines $p_1p_2, p_2p_3, p_3p_4, p_4p_5, p_5p_6, p_6p_1, p_1p_4, p_2p_5$ and $p_3p_6$), such that each line $H_j$ contains exactly 2 triple points and  4 nodes. 
Take $t_1, \dots t_9 \in\C^\times$ to be generic such that $\prod_{j=1}^9t_j=1$ and $t_{p_i}=1$ at 
all the vertices of the hexagon. Then note that $T_{\neq 1}=\emptyset$. 
Let $
D=\sum_{i=1}^9\overline{H}_i.%+\sum_{p\in T_{\neq 1}}E_p. 
$
Note that $D$ is connected and $D\cdot \overline{H}_i=1-2+0+4>0$. 
The ampleness of $D$  depends on the position of the six points $p_1, \dots, p_6$. 
Indeed, if $p_1, \dots, p_6$ are lying on a conic $C$, then the  strict 
transform $\overline{C}\subset S$ does not intersect $D$, and therefore $D\cdot \overline{C}=0$. 
Hence, when $p_1, \dots, p_6$ are  lying on a conic, then $S\smallsetminus D$ is not
affine.

We show  now that  when $p_1,....,p_6$ are not on a conic, there is no bad curve $C$ as above. Assume that in the real picture $p_1$, $p_2$, $p_3$, $p_4$ and $p_5$ are in this order on a conic. Denote $m_j=m_{p_j}$ and set $a=m_1$, $b=m_2$. It follows that $m_3=a$, $m_4=b$. The irreducible curve $C$ has degree $d=a+b$ by Proposition \ref{keyprop} (1). Hence, if $a>b$, the line $L$ determined by $p_1$ and $p_3$  satisfies $(L,C) \geq 2a >d$, and hence $C=L$, which is clearly not possible ($L$ intersects the arrangement in points not in $T_{= 1}$).
Hence $a=b$ and $d=2a$. There are two cases to consider.

\medskip

\noindent {\bf Case 1.} The curve $C$ passes through all the 6 points in $T_{= 1}$. 
Then the inequality in Proposition \ref{keyprop} (3)  becomes
$$(2a-1)(2a-2) \geq 6(a-1)^2,$$
which is equivalent to $(a-1)(a-2) \leq 0$. 

The case $a=1$ is impossible, since the 6 points are not on a conic. 
The case $a=2$ is also impossible, since this would imply the existence of an irreducible quartic curve with 6 points of multiplicity 2. By the genus formula involving the $\delta$-invariants, this is not possible.

\medskip

\noindent {\bf Case 2.} The curve $C$ meets a line $H_j$ just in one point. Then Proposition \ref{keyprop} (1) implies that $C$ is a line. Since any line through a triple point meets at least one line in $\A$ transversally in a node, the claim is proved.

Denote by $\A$ (resp. by $\A'$) the corresponding line arrangement when the 6 vertices of the hexagon are (resp. are not) on a conic. Note that $\A$ and $\A'$ are lattice-isotopic, if necessary refer to \cite{OT,DHA} for this notion, and hence the homeomorphism
$\phi : M(\A') \to M(\A)$ can be used to associate to any rank one local system $\LL$ on $M(\A)$
a rank one local system $\LL'=\phi^*(\LL)$ on $M(\A')$. These two local systems have the same monodromy with respect to the lines corresponding to each other under $\phi$, and moreover
$$H^1(M(\A), \LL)=H^1(M(\A'), \LL').$$
In other words, in spite of the fact that $M(\A')$ is affine and $M(\A)$ is not affine, their twisted cohomology with rank one coefficients are practically the same.

\end{example}

\section{A vanishing result}
We prove now the following result, generalizing the main result in \cite{cdo}.
See also the proof of \cite[Theorem 6.4.13]{SIT}, which is very similar to the proof below.

\begin{theorem}
\label{thm:vanish}
Let $S$ be a smooth proper complex variety of dimension $N \geq 2$. 
Let $D=\sum_{i=1}^nD_i$ be a divisor ($D_i$ irreducible). 
Let $I\subset [n]=\{1, \dots, n\}$ be a subset, and 
consider the corresponding subdivisor $D'=\sum_{i\in I}D_i$. 
Let $\LL$ be a rank one local system on $U=S\smallsetminus D$. Denote 
the monodromy of $\LL$ around $D_i$ by $t_i\in\C^\times$. 
Assume that 
\begin{itemize}
\item[(i)] 
$D$ is normal crossing along $D'$, that is, for any $p\in D'$, $D$ is 
normal crossing around $p$. 
\item[(ii)] 
$t_i\neq 1$ for $i\in I$. (In other words, $\LL$ has non-trivial monodromy 
around each component of $D'$.) 
\item[(iii)] 
$U'=S\smallsetminus D'$ is an affine variety. 
\end{itemize}
Then $H^1(S\smallsetminus D, \LL)=0$. 
\end{theorem}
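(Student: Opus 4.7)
The plan is to reduce $H^1(U,\LL)$ to a compactly supported cohomology group on the affine variety $U'$ and then invoke Artin vanishing for perverse sheaves. Let $j:U\hookrightarrow U'$ and $i':U'\hookrightarrow S$ denote the two open inclusions, $\iota:D'\hookrightarrow S$ the closed inclusion complementary to $i'$, and set $j_S:=i'\circ j$.

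The first and main step is to show that $\iota^*Rj_{S,*}\LL=0$, that is, the stalk of $Rj_{S,*}\LL$ vanishes at every point $p\in D'$. By assumption (i), in a small polydisc neighborhood $V$ of $p$ the divisor $D$ is normal crossing; if $s\geq 1$ components $D_{i_1},\dots,D_{i_s}$ of $D$ pass through $p$, then $V\cap U$ has the homotopy type of $(\Delta^*)^s$, and $\LL|_{V\cap U}$ has monodromy $t_{i_\alpha}$ around the $\alpha$-th factor. A Künneth decomposition then identifies the stalk with
\[
\bigotimes_{\alpha=1}^{s}H^\bullet(\Delta^*,\LL_\alpha),
\]
where $\LL_\alpha$ is the rank one local system on $\Delta^*$ with monodromy $t_{i_\alpha}$. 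Since $p\in D'$, at least one of the indices $i_\alpha$ belongs to $I$, so by assumption (ii) $t_{i_\alpha}\neq 1$, forcing that factor to vanish; hence the whole stalk is zero.

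From the open/closed distinguished triangle $i'_!i'^*Rj_{S,*}\LL\to Rj_{S,*}\LL\to\iota_*\iota^*Rj_{S,*}\LL\xrightarrow{+1}$, together with $i'^*Rj_{S,*}\LL=Rj_*\LL$ (because $i'$ is open) and the vanishing just established, I obtain $Rj_{S,*}\LL\simeq i'_!Rj_*\LL$. Since $S$ is proper, this yields
\[
H^k(U,\LL)=H^k(S,Rj_{S,*}\LL)=H^k(S,i'_!Rj_*\LL)=H^k_c(U',Rj_*\LL).
\]

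For the final step, note that since $U'$ is smooth the complement $U'\smallsetminus U=(D\smallsetminus D')\cap U'$ is a Cartier divisor on $U'$, so $j$ is an affine open inclusion. By the standard theorem on affine open embeddings, $Rj_*$ then preserves perverse sheaves; in particular $Rj_*\LL[N]$ is perverse on $U'$. Assumption (iii) enters here: by Artin vanishing for perverse sheaves on an affine variety, $H^k_c(U',\mathcal{P})=0$ for $k<0$ whenever $\mathcal{P}$ is perverse on affine $U'$. Applied to $\mathcal{P}=Rj_*\LL[N]$ this gives $H^m_c(U',Rj_*\LL)=0$ for all $m<N$; taking $m=1$ and using $N\geq 2$ yields $H^1(U,\LL)=0$. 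The main technical point is the stalk vanishing in the first step, which uses both (i) and (ii) in an essential way; the rest of the argument is formal once the Artin-type vanishing for perverse sheaves is in hand.
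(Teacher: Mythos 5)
Your proof is correct and follows essentially the same route as the paper: identify $H^1(U,\LL)$ with compactly supported cohomology on the affine variety $U'$ via the stalk vanishing of $Rj_{S,*}\LL$ along $D'$ (which is exactly where (i) and (ii) enter), then apply Artin vanishing to the perverse sheaf obtained by pushing $\LL[N]$ forward along the affine open inclusion $U\hookrightarrow U'$. The only difference is cosmetic and in your favor: by observing directly that $U'\smallsetminus U$ is a divisor in the smooth variety $U'$ (so the inclusion is affine in any dimension), you avoid the paper's preliminary Lefschetz-type reduction to $N=2$ and its appeal to Nagata's theorem.
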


\proof First note that we can assume $N=2$. Indeed, embed $S$ in some projective space $\PP^M$, and take $E \subset \PP^M$ a generic linear subspace of codimension $N-2$. Then $S \cap E$ is a smooth surface and by Zariski Theorem of Lefschetz type the inclusion $U \cap E=(S \cap E) \setminus (D \cap E) \to U$ is a 2-equivalence, see for instance \cite[Theorem (1.6.5)]{D1}. In other words, we can regard $U$ as being obtained from $U \cap E$ by adding cells of dimensions $\geq 3$, and 
 hence the above inclusion induces an isomorphism $H^1(U \cap E , \LL)=H^1(U,\LL).$

Assume from now on that $N=2$.
Let $i:U \to U'$ and $j:U' \to S$ be the two inclusions. The shifted sheaf $\LL[2]$ is a perverse sheaf on $U$. By Nagata's Theorem \ref{Nagata} and assumption (iii),
the inclusion $i$ is a quasi-finite affine morphism, so it follows by \cite[Corollary 5.2.17]{SIT} that $\F=Ri_*(\LL[2])$ is a perverse sheaf on $U'$. Since $U'$ is affine, one can use \cite[Corollary 5.2.19]{SIT} and conclude that $\HH^k(U',\F)=0$ for $k>0$ and $\HH^k_c(U',\F)=0$ for $k<0$. Let $a:S \to pt$ be the constant map to a point and recall that
$$\HH^k(U',\F)=H^k(Ra_*Rj_*\F)=H^k(Ra_*Rj_*Ri_*\LL[2])=H^{k+2}(U,\LL),$$
and also $\HH^k_c(U',\F)=H^k(Ra_!Rj_!\F)$. But $a$ is a proper map, and hence $Ra_*=Ra_!$. On the other hand, by the definition of the divisor $D'$, it follows that $Rj_!\F=Rj_*\F$. Indeed, the monodromy of the local system $\LL$ about an exceptional component $E_p$ for $p \in  T_{\ne1}$ is non-trivial, and hence the local twisted cohomology groups vanish. {\it The normal crossing property of $D$ along $D'$ is used to prove this local vanishing.}
It follows that $\HH^k(U',\F)=\HH^k_c(U',\F)$, and in particular 
$$ H^{1}(U,\LL)= \HH^{-1}(U',\F)=\HH^{-1}_c(U',\F)=0.$$
\endproof

\begin{example} \label{empty}
Let $\A$ be a line arrangement in $\PP^2$.
Suppose there exists a line $H\in \A$ such that $H\cap T_{=1}=\emptyset$. Then take $S$ to be the surface obtained from $\PP^2$ by blowing up only the points in $T\cap H$. Let $D$ be the reduced total transform of the union of all the lines in $\A$.
Then $S=\Bl_{T\cap H}\PP^2$ and $D'=\overline{H}+\sum_{p\in T\cap H}E_p$ 
satisfy the conditions of Theorem \ref{thm:vanish}. By the definition of $D'$, one sees that $U'=\C^2$ in this case.
This situation is 
nothing else but the result in \cite{cdo}. 

\end{example}

The main application of Theorem \ref{thm:vanish} is the following result, which was known to hold for real arrangements, see \cite{yos-mil, yos-locsys} and Remark \ref{rk:general} below.

\begin{proposition}
\label{prop:general} Let $\A$ be a line arrangement in $\PP^2$ and $\LL$ a rank one local system on the complement $M(\A)$.
Suppose that there exists a line $H_0\in\A$ such that 
$H_0\cap T_{=1}=\{p\}$ is a single point. Furthermore, assume that there is no line $L$ in $\PP^2$, passing through the point $p$ and 
such that $L\cap \A \subset T_{=1}$. 
Then $H^1(M(\A), \LL)=0$. 
\end{proposition}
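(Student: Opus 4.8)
The plan is to deduce the statement directly from Theorem \ref{thm:vanish}, applied on the blow-up $S=\Bl_T\PP^2$ of Section 3. I take $D$ to be the reduced total transform of $\bigcup_iH_i$, namely $D=\sum_{i=1}^n\overline{H}_i+\sum_{p\in T}E_p$, so that $S\smallsetminus D=M(\A)$ on the nose; and I take $D'=\sum_{i=1}^n\overline{H}_i+\sum_{p\in T_{\neq1}}E_p$, which is precisely the divisor denoted $D$ in Section 3 and in Propositions \ref{propNM} and \ref{keyprop}, to be the sub-divisor of those components around which $\LL$ has nontrivial monodromy. Blowing up every point of $T$ makes $D$ a normal crossing divisor, so hypothesis (i) holds; and hypothesis (ii) holds by the very definition of $D'$, the omitted components being exactly the $E_p$ with $p\in T_{=1}$, where $t_p=1$. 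Hence everything reduces to checking hypothesis (iii): that $U'=S\smallsetminus D'$ is affine.

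By Goodman's theorem it suffices to produce an effective divisor $\widetilde{D'}=\sum_i a_iD'_i$ with all $a_i>0$ that is ample, and by the Nakai--Moishezon criterion (Proposition \ref{thmNM}) this amounts to $\widetilde{D'}^2>0$ together with $\widetilde{D'}\cdot C>0$ for every irreducible curve $C$. I would split this into two claims: (a) no irreducible curve of $S$ is disjoint from $D'$ unless it is one of its components; and (b) $D'$ supports an NM divisor. Granting (b), one has $\widetilde{D'}^2=\sum_ia_i(\widetilde{D'}\cdot D'_i)>0$ and $\widetilde{D'}\cdot D'_j>0$ for all $j$ automatically; and for an irreducible $C$ that is not a component of $D'$ one has $\widetilde{D'}\cdot C=\sum_ia_i(D'_i\cdot C)\geq0$, with strict positivity precisely because (a) forces $C$ to meet $D'$. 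Thus $\widetilde{D'}$ is ample and $U'$ is affine.

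Claim (a) is where Proposition \ref{keyprop} enters, and it is the conceptual core. Every irreducible curve of $S=\Bl_T\PP^2$ is either an exceptional divisor $E_q$ or the strict transform of an irreducible plane curve. An $E_q$ with $q\in T_{=1}$ meets $\overline{H}_i$ for every line $H_i\ni q$, hence meets $D'$; so a curve as in (a) must be the strict transform of an irreducible $C\subset\PP^2$ whose strict transform misses $D'$, and unwinding the definitions this says $C\cap\bigcup_iH_i\subset T_{=1}$. Applying Proposition \ref{keyprop}(1) to $H_0$, whose unique point of $T_{=1}$ is $p$, gives $m_p=\sum_{x\in H_0\cap T_{=1}}m_x=\deg C$; since $C$ is irreducible with a point of multiplicity equal to its degree, $C$ is a line through $p$ with $C\cap\bigcup_iH_i\subset T_{=1}$. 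This is exactly the configuration forbidden by hypothesis, so no such $C$ exists.

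The main obstacle is claim (b), the construction of the NM divisor, because after the $k_0$ blow-ups along $H_0$ the component $\overline{H}_0$ has self-intersection $1-k_0<0$ while the exceptional curve $E_p$ that would compensate for this is absent from $D'$. I would first observe that the same hypothesis makes $D'$ connected: a connected component of $D'$ not containing $\overline{H}_0$ would consist of lines meeting $H_0$ only in $T_{=1}=\{p\}$, and running the argument of (a) these again produce a forbidden line through $p$ with all arrangement points in $T_{=1}$. With $D'$ connected, Corollary \ref{cor:connect} reduces (b) to exhibiting a single sub-configuration supporting an NM divisor, after which Lemma \ref{lem:weak} propagates small positive coefficients over the rest; the relevant input from Proposition \ref{propNM} is that $D'\cdot E_q=n_q-1\geq2$ and $D'\cdot\overline{H}_0=d_0$. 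The delicate computational point—choosing the sub-configuration and balancing the coefficients on the lines through the double points of $H_0$ against the negative self-intersections created by the blow-ups—is making all the inequalities $\widetilde{D'}\cdot\overline{H}_i>0$ hold at once; the non-pencil assumption on $\A$ is what keeps these feasible.
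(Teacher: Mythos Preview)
Your overall architecture matches the paper's exactly: blow up all of $T$, take $D'=\sum_i\overline{H}_i+\sum_{q\in T_{\neq1}}E_q$, verify (i)--(ii) of Theorem~\ref{thm:vanish} trivially, and reduce (iii) to Nakai--Moishezon via an NM divisor plus the ``no bad curve'' argument from Proposition~\ref{keyprop}. Your claim~(a) is the paper's argument verbatim, and your connectedness reduction via Corollary~\ref{cor:connect} is also what the paper does.

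The gap is claim~(b): you do not actually construct the NM sub-configuration, and your sketch of how to do so is off. You write that one should balance ``coefficients on the lines through the double points of $H_0$,'' but double points are not blown up, so they create no negative self-intersection to balance; the relevant points are the $q_i\in T_{\neq1}\cap H_0$. The paper's construction is quite specific: set $T_{\neq1}\cap H_0=\{q_1,\dots,q_s\}$ and, when $s>0$, take
\[
\widetilde{D}=\overline{H}_0+\sum_{i=1}^s b_i E_{q_i}+\sum_{i=1}^s\sum_{c=1}^{n_{q_i}-1}a_{i,c}\overline{H}_{i,c},
\]
where $H_{i,1},\dots,H_{i,n_{q_i}-1}$ are the lines through $q_i$ other than $H_0$, with $0<a_{i,c}<1/|\overline{H}_{i,c}^2|$ whenever $\overline{H}_{i,c}^2<0$ and $1<b_i<1+\sum_c a_{i,c}$. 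One then checks $\widetilde{D}\cdot\overline{H}_0=\sum_i(b_i-1)>0$, $\widetilde{D}\cdot E_{q_i}=1-b_i+\sum_c a_{i,c}>0$, and $\widetilde{D}\cdot\overline{H}_{i,c}\geq b_i-1>0$. The case $s=0$ must be treated separately (the paper uses $\varepsilon\overline{H}_1+\overline{H}_0$ for any $H_1$ not through $p$), and your sketch does not distinguish it. The formula $D'\cdot\overline{H}_0=d_0$ you quote is correct but is not what drives the construction; the point is to build positivity out of the exceptional divisors $E_{q_i}$ sitting in $D'$, not out of the double points of $H_0$.
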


\proof
Here we take $S$ to be the surface obtained from $\PP^2$ by blowing up all the points in $T$, and let $D$ be the reduced total transform of the union of all the lines in $\A$. In this way $U=M(\A)$.
Set $T_{\neq 1}\cap H_0=\{q_1, \hdots, q_s\}$. 

We first assume $s>0$. 
Denote by $n_i$ the 
multiplicity of $q_i$ in $\A$. 
For each $q_i$, 
set $\A_{q_i}=\{H_{i,1}, H_{i, 2}, \dots, H_{i, n_i-1}, H_0\}$. 
Consider the effective divisor $\widetilde{D}$ of the form 
\[
\widetilde{D}=
\overline{H}_0+
\sum_{i=1}^s b_i E_{q_i}+
\sum_{i=1}^s\sum_{c=1}^{n_i-1}a_{i,c}\overline{H}_{i,c}, 
\]
where $b_i, a_{i, c}>0$ are positive numbers which satisfy 
\begin{itemize}
\item 
$a_{i, c}<\frac{1}{- \overline{H}_{i, c}^2}$, if $\overline{H}_{i, c}^2<0$, 
\item 
$a_{i, c}>0$ is arbitrary, if $\overline{H}_{i, c}^2\geq 0$, 
\item 
$b_i$ satisfies $1<b_i<1+\sum_{c=1}^{n_i-1}a_{i, c}$. 
\end{itemize}
Let us prove that with these choices
$\widetilde{D}$ is an NM divisor. Indeed, one clearly has
\[
\begin{split}
\widetilde{D}\cdot\overline{H}_0
&=
1-1-s+\sum_{i=1}^s b_i=
\sum_{i=1}^s(b_i-1)>0. \\
\widetilde{D}\cdot\overline{E}_{q_i}
&=-b_i+1+\sum_{c=1}^{n_i-1}a_{i, c}>0\\
\widetilde{D}\cdot\overline{H}_{i, c}
&=b_i+a_{i,c}\overline{H}_{i, c}^2>b_i-1>0. 
\end{split}
\]
Therefore $\widetilde{D}$ is an NM divisor. 
Using Corollary \ref{cor:connect}, the divisor 
\[
D'=\overline{H}_0+\sum_{p\notin H}\overline{H}+\sum_{q\in T_{\neq 1}}E_q
\]
supports an NM divisor. Let $\widetilde{C}\subset S$ be an irreducible 
curve different from any components of ${D}'$ 
with $\widetilde{C}\cdot {D}'=0$. Then the image $C\subset\PP^2$ of $\widetilde{C}$ under the canonical projection $S \to \PP^2$
is a curve which intersects $H_0$ only at $p$. By Proposition \ref{keyprop} (1), $C$ is a line. 
However, by the assumption, 
$\widetilde{C}$ must intersect ${D'}$, which is a contradiction. 
Now it follows from Nakai-Moishezon criterion that $U'=S\smallsetminus D'$ is 
affine, and hence by Theorem \ref{thm:vanish} we get $H^1(M, \LL)=0$. 

Next we consider the case $s=0$. Since $\A$ is not a pencil type, 
there is $H_1$ that does not pass through $p$. Then it is easily seen that 
$\varepsilon \overline{H}_1+\overline{H}_0$ is an NM divisor for 
$0<\varepsilon\ll 1$. The remaining part is similar to the previous case. 
\endproof

\begin{remark}\label{rk:general}
It is natural to expect that 
Proposition \ref{prop:general} holds without assuming 
that {\it there is no line $L$ in $\PP^2$, passing through the point $p$ and 
such that $L\cap \A\subset T_{=1}$. }
Indeed, the claim without this extra condition is true for line arrangements defined over real numbers, see \cite{yos-mil, yos-locsys}. 
\end{remark}

The following result was established in \cite{Dreflection} using completely different techniques, namely residues of rational differential forms with poles along the line arrangement $\A$.

\begin{corollary}\label{cor:Ceva}

The monodromy eigenspaces $H^1(F,\C)_{\lambda}$ of the Milnor 
fiber cohomology  for the monomial arrangement a.k.a. the Ceva arrangement 
$$\A(m,m,3): (x^m-y^m)(y^m-z^m)(x^m-z^m)=0$$
are trivial if $\lambda^3 \ne 1$, for any $m \geq 3$.

\end{corollary}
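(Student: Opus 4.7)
The plan is to identify the Milnor fiber eigenspace $H^1(F,\C)_\lambda$ with the twisted cohomology $H^1(M(\A),\LL_\lambda)$, where $\LL_\lambda$ is the rank one local system on $M(\A)=M(\A(m,m,3))$ with monodromy $\lambda$ around each of the $3m$ lines (a standard fact for cyclic covers associated to a Milnor fibration), and then to apply Proposition \ref{prop:general} together with Libgober's vanishing recalled in Example \ref{empty}. Thus it suffices to show $H^1(M(\A),\LL_\lambda)=0$ whenever $\lambda^3\ne 1$.

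First I would read off the combinatorics of $\A(m,m,3)$ from its defining polynomial. The three factors $x^m-y^m$, $y^m-z^m$, $x^m-z^m$ correspond to three pencils of $m$ concurrent lines through the coordinate vertices $P_3=[0:0:1]$, $P_1=[1:0:0]$, $P_2=[0:1:0]$ respectively; two lines from distinct pencils meet in one of $m^2$ triple points, and all remaining intersections are nodes. Hence the total turn monodromy of $\LL_\lambda$ equals $\lambda^m$ at each $P_i$ and $\lambda^3$ at each triple point. Under the hypothesis $\lambda^3\ne 1$ no triple point lies in $T_{=1}$, so $T_{=1}\subset\{P_1,P_2,P_3\}$, and I would split into two cases. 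If $\lambda^m\ne 1$, then $T_{=1}=\emptyset$, every line $H\in\A$ satisfies $H\cap T_{=1}=\emptyset$, and the vanishing follows immediately from Example \ref{empty}.

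If instead $\lambda^m=1$, then $T_{=1}=\{P_1,P_2,P_3\}$ (and $\lambda\ne 1$ automatically, since $\lambda^3\ne 1$). Because every line of $\A$ passes through exactly one $P_i$, any chosen $H_0\in\A$ satisfies $H_0\cap T_{=1}=\{P_i\}$, a single point, so the first hypothesis of Proposition \ref{prop:general} holds with $p=P_i$. The main, though mild, obstacle is to verify the second hypothesis: that no line $L\subset\PP^2$ through $P_i$ satisfies $L\cap\A\subset T_{=1}$. I would argue as follows: such an $L$ would have $2m$ intersections with the lines of $\A$ not through $P_i$, all of which would have to lie in $\{P_j,P_k\}$ (where $\{i,j,k\}=\{1,2,3\}$); this would force $L$ to pass through the remaining two vertices, hence through all three $P_i$, contradicting the fact that the coordinate vertices are not collinear. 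With this check in place, Proposition \ref{prop:general} yields $H^1(M(\A),\LL_\lambda)=0$ in the second case as well, completing the proof.
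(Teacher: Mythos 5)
Your proposal is correct and follows essentially the same route as the paper: reduce to the case $\lambda^m=1$ via Libgober's vanishing (the case $T_{=1}=\emptyset$ of Example \ref{empty}), then apply Proposition \ref{prop:general} with $p$ the unique multiplicity-$m$ vertex on a chosen line, the paper simply declaring the hypotheses ``easy to see'' where you verify them explicitly. (One harmless slip: $\A(m,m,3)$ has no nodes at all --- every intersection of lines from distinct pencils is a triple point --- but this does not affect your argument.)
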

\begin{proof} Any line in $\A(m,m,3)$ contains a point of multiplicity $m$ and $m$ triple points. It follows that $H^1(F,\C)_{\lambda}\ne 0 $ and $\lambda^3 \ne 1$ imply $\lambda^m=1$ by Libgober's result in \cite{Li}.
It is easy to see that the monomial arrangement satisfies the 
assumptions of Proposition \ref{prop:general} for the rank one local system $\LL$ associated to such an eigenvalue $\lambda$. 
\end{proof}
Since it was shown in \cite{MP} that , for   $\lambda^3 = 1$ , $\lambda \ne 1$, one  has $\dim H^1(F,\C)_{\lambda}=1$ when $m$ is not divisible by 3, and $\dim H^1(F,\C)_{\lambda}=2$ when $m$ is  divisible by 3, it follows that in this case the monodromy action on $H^1(F,\C)$ is completely known.

\section{The  exceptional reflection arrangement $\A(G_{31})$}

Recall first the defining equation $f=0$ for the reflection arrangement $\A(G_{31})$ in $\C^4$. One has 
\begin{equation}
\label{e6}
f=xyzt(x^4-y^4)(x^4-z^4)(x^4-t^4)(y^4-z^4)(y^4-t^4)(z^4-t^4)
\end{equation}
$$((x-y)^2-(z+t)^2) ((x-y)^2-(z-t)^2) ((x+y)^2-(z+t)^2)((x+y)^2-(z-t)^2)$$
$$((x-y)^2+(z+t)^2)((x-y)^2+(z-t)^2)((x+y)^2+(z+t)^2)((x+y)^2+(z-t)^2)$$
$$((x-z)^2+(y+t)^2) ((x-z)^2+(y-t)^2)((x+z)^2+(y+t)^2)((x+z)^2+(y-t)^2)$$
$$((x-t)^2+(y+z)^2)((x-t)^2+(y-z)^2)((x+t)^2+(y+z)^2)((x+t)^2+(y-z)^2),$$
see \cite{DStG31}. The result we prove in this section is the following.

\begin{proposition} \label{G31}
Let $\A(G_{31})$ be the reflection arrangement in $\C^4$ corresponding to the exceptional group $G_{31}$, and let $F$ be the associated Milnor fiber. Then the monodromy action on $H^1(F,\C)$ is the identity.

\end{proposition}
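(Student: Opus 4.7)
The plan is to reduce to a generic plane section of $\A(G_{31})$ and apply Theorem \ref{thm:vanish}. Choose a generic hyperplane $H\subset\PP^3$ and set $\A=H\cap\A(G_{31})$, a line arrangement of $60$ lines in $H\cong\PP^2$ that contains, in particular, $30$ points of multiplicity $6$. The Milnor fiber $F$ is the $60$-fold cyclic cover of $M(\A(G_{31}))=\PP^3\smallsetminus\A(G_{31})$, so
\[
H^1(F,\C)=\bigoplus_{\lambda^{60}=1}H^1(M(\A(G_{31})),\LL_\lambda),
\]
where $\LL_\lambda$ is the rank-one local system on $M(\A(G_{31}))$ with monodromy $\lambda$ around each reflection hyperplane. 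Exactly as in the opening paragraph of the proof of Theorem \ref{thm:vanish}, the Zariski theorem of Lefschetz type yields an isomorphism $H^1(M(\A(G_{31})),\LL_\lambda)\cong H^1(M(\A),\LL_\lambda|_{M(\A)})$, so it is enough to prove $H^1(M(\A),\LL_\lambda)=0$ for every $\lambda\neq 1$ with $\lambda^{60}=1$.

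For a Milnor fiber local system the total turn monodromy at $p$ equals $\lambda^{n_p}$. Using the structure of the codimension-two flats of $\A(G_{31})$, one checks that $T_{=1}(\LL_\lambda)$ is empty unless $\mathrm{ord}(\lambda)\in\{2,3,6\}$, and in each of these three cases $T_{=1}(\LL_\lambda)$ coincides with the $30$ points of multiplicity $6$. For every other non-trivial $\lambda$, Libgober's vanishing from \cite{Li} gives $H^1(M(\A),\LL_\lambda)=0$ directly. In the remaining three cases I take $S=\Bl_T\PP^2$ and
\[
D'=\sum_{i=1}^{60}\overline{H}_i+\sum_{p\in T_{\neq 1}}E_p.
\]
The local system $\LL_\lambda$ has non-trivial monodromy around every component of $D'$ and the total reduced transform $D$ of $\A$ is normal crossing along $D'$, so Theorem \ref{thm:vanish} will furnish the required vanishing once $U'=S\smallsetminus D'$ is shown to be affine.

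To establish affineness I would apply the Nakai--Moishezon criterion (Proposition \ref{thmNM}) together with Corollary \ref{cor:connect} for a suitably chosen NM subdivisor. This reduces the problem to ruling out irreducible curves $C\subset\PP^2$ whose strict transform in $S$ does not meet $D'$---equivalently, curves meeting $\bigcup_iH_i$ only at the $30$ points of $T_{=1}$. By Proposition \ref{keyprop}, such a curve of degree $d$ with multiplicity $m_p$ at $p\in C\cap T_{=1}$ must satisfy
\[
\sum_{p}m_p=10d,\qquad \sum_{p\in H_j\cap T_{=1}}m_p=d \ \text{for every line } H_j,\qquad (d-1)(d-2)\geq\sum_{p}(m_p-1)^2.
\]
A short combinatorial analysis using $|T_{=1}|=30$ rules out $d\geq 5$, leaving the four cases $d=1,2,3,4$. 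These amount to concrete geometric questions about the $30$ points of multiplicity $6$: no line contains $10$ of them, no smooth conic contains $20$ of them, no (smooth, nodal, or binodal) cubic passes through $28$--$30$ of them with the admissible multiplicity patterns, and no irreducible quartic realises the admissible patterns either. These statements are \emph{not} determined by the combinatorics of $\A$; they depend on the precise coordinates of the $30$ points in $\PP^2$, and constitute the main obstacle in the proof. I would settle them by explicit interpolation computations starting from the defining equation \eqref{e6}, as carried out with the SINGULAR code mentioned in the introduction. Once all four cases are excluded, $D'$ supports an NM divisor, $U'$ is affine, Theorem \ref{thm:vanish} yields $H^1(M(\A),\LL_\lambda)=0$ for every $\lambda\neq 1$, and the monodromy on $H^1(F,\C)$ is therefore the identity.
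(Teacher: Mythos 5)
Your overall strategy is the same as the paper's: pass to a generic plane section $\A=H\cap\A(G_{31})$, reduce via Zariski--Lefschetz, verify that $D'=\sum_i\overline{H}_i+\sum_{p\in T_{\neq1}}E_p$ supports an NM divisor, use Proposition \ref{keyprop} to rule out curves meeting $\A$ only in $T_{=1}$, and conclude by Theorem \ref{thm:vanish}. The computational checks you describe (no line through $10$ of the $30$ sextuple points, no conic, cubic or quartic with the admissible multiplicity patterns) are essentially the ones the paper carries out, modulo bookkeeping: the paper's Case 1/Case 2 analysis pins the degrees down to $d\in\{3,4\}$ when all $30$ points lie on $C$ and to $d=2$ (forced to be a conic through a ``star configuration'' of $12$ points) otherwise, which makes the interpolation checks smaller than the ones you sketch, but this is an implementation detail.

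There is, however, one genuine error. You claim that for $\mathrm{ord}(\lambda)\in\{2,3,6\}$ the set $T_{=1}(\LL_\lambda)$ always coincides with the $30$ points of multiplicity $6$, and you propose to run the same geometric argument in all three cases. This is false for $\mathrm{ord}(\lambda)=3$: at a triple point $p$ the total turn monodromy is $t_p=\lambda^{3}=1$, so \emph{all} $320$ triple points lie in $T_{=1}(\LL_\lambda)$ as well. Consequently $T_{\neq1}=\emptyset$, the divisor $D'$ degenerates to $\sum_i\overline{H}_i$ with $D'\cdot\overline{H}_i=1-19+0+12=-6<0$, and neither the NM verification nor hypothesis (ii) of Theorem \ref{thm:vanish} survives; the order-$3$ case simply cannot be handled this way. (Order $2$ is fine: there $\lambda^{3}=\lambda\neq1$, so triple points stay in $T_{\neq1}$ and your argument goes through exactly as for order $6$.) The paper closes this hole by quoting the result of M\u acinic, Papadima and Popescu \cite{MPP} that the monodromy on $H^1(F,\C)$ has no eigenvalues of order $2$ or $3$, and then only the order-$6$ eigenvalues require the geometric argument. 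You need either that citation or some independent treatment of the order-$3$ eigenspace; as written, your proof does not cover it.
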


\proof 
We denote by $\A$ the arrangement in $\PP^2$ obtained by taking first the intersection of 
$\A(G_{31})$ with a generic hyperplane $H$ in $\C^4$, and then considering the corresponding projective line arrangement in $\PP^2=\PP(H)$. It is easy to check that one can take
$$H: 2x+5y-9z-t=0.$$
Then each line in $\A$ contains 12 nodes, 16 triple points and 3 points of multiplicity 6, see 
\cite[Table C.12, p. 293]{OT}. On the other hand, it is known that the monodromy on $H^1(F,\C)$ has no eigenvalues of order 2 and 3, see \cite{MPP}. It remains to show that eigenvalues of order 6 are also impossible. 

So let $\LL$ be the local system associated to such an eigenvalue of order 6. It follows that the corresponding set $ T_{\neq 1}$ consists of the set of triple points, and the set $T_{=1}$ consists of the points of multiplicity 6.
In this case there are $n=60$ lines $H_i$ and we can consider the divisor
\[
D'=\sum_{i=1}^{60}\overline{H}_i+\sum_{p\in T_{\neq 1}}E_p. 
\]
Using Proposition \ref{propNM} (1) it follows that
$$D' \cdot \overline{H}_i=1-(16+3)+16+12=10>0,$$
for any $i$. If $p \in T_{\neq 1}$, then
$D' \cdot E_p= 3-1=2>0$ and if  $p \in T_{=1}$, then $D' \cdot E_p=6>0.$ Hence $D$ is an NM divisor.

Let $C$ be a curve in $\PP^2$ intersecting the arrangement only in the points in $T_{=1}$, and transversally at these points as explained in Proposition \ref{keyprop}. If we show that such a curve does not exist, the claim in Proposition  \ref{G31} follows from Theorem \ref{thm:vanish}.
There are two cases to consider.

\medskip

\noindent {\bf Case 1.} Assume that all the 30 points in $T_{=1}$ are situated on the curve $C$.
With the notation from  Proposition \ref{keyprop}, we have $n_p=6$, and hence
$$s:=\sum m_p=10d.$$
Hence $d=s/10$ and the inequality in Proposition \ref{keyprop} (3) yields
$$(s-10)(s-20) \geq 100(\sum m_p^2-2s+30).$$
Since there are 30 points of multiplicity 6, using the inequality $30 \sum m_p^2 \geq s^2$,
we get that $s \in (25,48)$, and hence the only possible values for $d$ are $d=3$ and $d=4$.

Consider first the case $d=3$. Then the following 10 planes are clearly edges in the intersection lattice $L(\A(G_{31}))$ with multiplicity 6:
$$(x=y=0); \ (x=z=0); \ (x=t=0); \ (y=z=0); \ (y=t=0); \ (z=t=0);$$
$$(x-y=z-t=0); \ (x-y=z+t=0); \ (x+y=z-t=0); \ (x+y=z+t=0).$$
Using the equation of the hyperplane $H$, we see that the following 10 points, corresponding in order to the above 10 planes, are points of multiplicity 6 in the line arrangement $\A$:
$$p_1=(0:0:1), p_2=(0:1:0), p_3=(0:9:5), p_4=(1:0:0), $$
$$p_5=(9:0:2),p_6=(5:-2:0), p_7=(10:10:7), p_8=(8:8:7),$$ 
$$ p_9=(-10:10:3) \text{ and }  p_{10}=(-8:8:3).$$
A direct computation shows that there is no cubic plane curve passing through these 10 points.
Next consider the case $d=4$. Then we have to consider the following 5 additional 
 edges in the intersection lattice $L(\A(G_{31}))$ with multiplicity 6:
$$(x-z=y-t=0); \ (x-z=y+t=0); \ (x+z=y-t=0); \ (x+z=y+t=0); $$
$$\ (x-t=y-z=0),$$
and the corresponding 5 additional points of multiplicity 6 in the line arrangement $\A$:
$$p_{11}=(4:7:4), p_{12}=( 6:7:6), p_{13}=(-4:11:4), p_{14}=(-6:11:6)$$
and $ p_{15}=(4:1:1).$
A direct computation shows that there is no quartic plane curve passing through all these 15 points.

\medskip

\noindent {\bf Case 2.} Assume now that there is at least a point $p \in T_{=1}$ with $m_p=0$, i.e. $p$ is not on the curve $C$. The lines in $\A$ passing through $p$,
say $H_i$ for $i=1,...,6$, should contain each two points $p'_i,q'_i$ in $C \cap  T_{=1}$. Indeed, otherwise $C$ must be a line $L$ by Proposition \ref{keyprop} (1). But this line $L$ should intersect the arrangement only in points of multiplicity 6, hence it should contain 10 such points. However such a line $L$ does not exist, since we have the following result.

\begin{lemma}
\label{lem:G31}
Besides the points $p_j$ for $j=1,...,15$ of multiplicity 6 introduced above, consider the remaining $15$ points of multiplicity 6:
$$p_{16}=(14:-1:1), p_{17}=( 4:3:3), p_{18}=(14:-3:3), $$
$$p_{19}=( 9-i:1+9i:2+5i), p_{20}=(9+i:-1+9i:2+5i),$$
$$ p_{21}=(9-i:-1-9i:2-5i), p_{22}=( 9+i:1-9i:2-5i),$$
$$ p_{23}=(5+i:-2+9i:-1+5i), p_{24}=(5-i:-2+9i:-1+5i).$$
$$ p_{25}=( 5+i:-2-9i:1-5i), p_{26}=(5-i:-2-9i:-1-5i), $$
$$ p_{27}=(-5+9i:2-i:1+2i), p_{28}=(5-9i:-2-i:1-2i).$$
$$ p_{29}=( 5+9i:-2+i:1+2i), p_{30}=(-5-9i:2+i:1-2i),$$
where $i^2=-1$.
Then there are exactly 60 lines containing 3 of these 30 points of multiplicity 6, which are in fact the 60 lines of the line arrangement $\A$. More precisely the points $\{p_i,p_j,p_k\}$ are collinear if and only if the index set $\{i,j,k\}$ is one of the following 60 subsets of $\{1,2,...,30\}$.
$$\{1, 2, 3 \},
\{ 1, 4, 5 \},
\{ 1, 7 ,8 \},
 \{1, 9 ,10 \},
\{ 1, 19, 20 \},
\{ 1, 21, 22 \},$$
$$\{ 2, 4, 6 \},
\{ 2 ,11, 12\}, 
\{ 2, 13, 14 \},
\{ 2, 23, 24 \},
\{ 2 ,25, 26 \},
\{ 3, 5, 6 \},$$
$$\{ 3, 15, 16\}, 
\{ 3 ,17, 18 \},
\{ 3, 27, 29 \},
\{ 3, 28, 30 \},
\{ 4 ,15, 17 \},
\{ 4 ,16 ,18 \},$$
$$\{ 4 ,27, 28 \},
\{ 4, 29, 30 \},
\{ 5 ,11, 13 \},
\{ 5, 12 ,14 \},
\{ 5, 23, 25 \},
\{ 5, 24, 26 \},$$
$$\{ 6 ,7, 9 \},
\{ 6, 8 ,10 \},
 \{6 ,19 ,21 \},
 \{6 ,20, 22 \},
 \{7, 11 ,18 \},
 \{7 ,14, 15 \},$$
$$ \{7 ,24, 30 \},
\{ 7, 25, 27 \},
\{ 8 ,12, 16 \},
 \{8 ,13, 17 \},
\{ 8, 23, 29 \},
\{ 8, 26 ,28 \},$$
$$\{ 9, 12, 17 \},
 \{9 ,13 ,16 \},
 \{9, 23 ,28 \},
 \{9, 26, 29 \},
 \{10 ,11, 15 \},
 \{10, 14 ,18 \},$$
$$ \{10, 24, 27 \},
 \{10, 25, 30 \},
 \{11, 20, 28 \},
\{ 11, 21, 29 \},
\{ 12 ,19 ,27 \},
 \{12 ,22, 30 \},$$
$$\{13 ,19 ,30 \},
 \{13, 22, 27 \},
\{ 14, 20, 29 \},
\{ 14, 21, 28 \},
 \{15 ,19, 26 \},
\{ 15, 22 ,23 \},$$
$$\{ 16 ,20, 24 \},
\{ 16, 21, 25 \},
 \{17, 20 ,25 \},
 \{17, 21, 24 \},
 \{18 ,19, 23 \},
 \{18, 22, 26 \}.$$

Moreover, there is no line $L$ containing 4 points out of the 30 points $p_j$, $j=1,30$ of multiplicity 6.
\end{lemma}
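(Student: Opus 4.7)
\proof[Proof plan for Lemma \ref{lem:G31}]
The proof is essentially a computer-verification once the $30$ points are identified, so the first task is to justify the list $p_1,\dots,p_{30}$. Each point of multiplicity $6$ in the line arrangement $\A= H\cap \A(G_{31})$ corresponds to a $2$-dimensional flat $V\subset \C^4$ of $L(\A(G_{31}))$ lying on exactly $6$ of the hyperplanes occurring in \eqref{e6}. From the description of the reflection representation of $G_{31}$ (equivalently, from Table C.12 of \cite{OT}), there are exactly $30$ such flats. The first plan step is to enumerate these flats as $V_k=\{L_k=L'_k=0\}\subset\C^4$ and to compute $p_k=\PP(V_k\cap H)\in \PP(H)=\PP^2$ by solving a $3\times 4$ linear system (the two equations $L_k=L'_k=0$ together with $2x+5y-9z-t=0$). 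For the flats that are defined over $\Q$ one obtains the real points $p_1,\dots,p_{18}$; the remaining $12$ flats come in six complex conjugate pairs and yield the points $p_{19},\dots,p_{30}$.

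The second step is to verify the list of $60$ collinear triples. Since each line of $\A$ contains exactly $3$ points of multiplicity~$6$, the $60$ lines of $\A$ produce $60$ triples from the $30$ points $p_k$. For every triple $\{i,j,k\}$ in the list, one checks that $\det[p_i\mid p_j\mid p_k]=0$ and identifies the corresponding line of $\A$; this is a routine determinantal check carried out in SINGULAR (using the code of G.~Sticlaru mentioned in the introduction). This yields $60$ distinct lines, and since the $60$ lines of $\A$ are mutually distinct, all $60$ triples account for pairwise distinct lines.

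The third and central step is to show that no further triple $\{i,j,k\}\subset\{1,\dots,30\}$ is collinear. The brute-force verification checks the $\binom{30}{3}-60 = 4000$ remaining triples by computing each determinant, and this is exactly the type of task handled by the SINGULAR script. The main (only) obstacle is ensuring the computation is exact: the points $p_{19},\dots,p_{30}$ have coordinates in $\Z[i]$, so the computation must be carried out in the number field $\Q(i)$ to avoid floating point errors. Once exact arithmetic is used, the check is immediate.

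The second assertion of the lemma, namely that no line in $\PP^2$ passes through $4$ of the $30$ points, follows formally from the first. Indeed, if a line $L$ contained $4$ of the points, then $L$ would support $\binom{4}{3}=4$ collinear triples from the first part; but the $60$ triples listed correspond bijectively to the $60$ lines of $\A$, each containing exactly $3$ of the points, so no line of $\A$ can carry a fourth point. Since by the first part there are no collinear triples outside these $60$, no other line carries three, let alone four, of the $p_k$. This completes the reduction of the lemma to the exact computation described above.
\endproof
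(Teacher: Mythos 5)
Your proposal is correct and takes essentially the same approach as the paper, which likewise reduces the lemma to a brute-force SINGULAR verification of collinearity over all $\binom{30}{3}=4060$ triples (your added care about identifying the $30$ points from the rank-two flats and about using exact arithmetic in $\Q(i)$ is sensible, though the paper leaves this implicit). The only divergence is that the paper establishes the final ``no four collinear points'' assertion by a second computation over all $\binom{30}{4}=27{,}405$ quadruples, whereas you observe that it already follows logically from the triple check together with the distinctness of the $60$ lines; both arguments are valid, and yours renders the second computation redundant.
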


\proof The proof is a direct check-up using a SINGULAR code written for us by Gabriel Sticlaru. The idea is to test all the subsets of 3 elements in the set of 30 points of multiplicity 6, and decide how many among these ${30 \choose 3}= 4060$ subsets correspond to collinear triplets. The same test for the ${30 \choose 4}= 27,405$ subsets with 4 elements gives no subset.

\endproof

\begin{remark}
\label{rk:G31}
A more rapid way, yielding less complete information, to show the non-existence of a line $L$ containing 10 points  of multiplicity 6 is to show that there is no line containing 4 out of the 24 points $p_i$, $i=1,...,24$. This approach involves only ${24 \choose 4}= 10,626$ subsets to be checked.
\end{remark}

Let $a_i>0$ (resp. $b_i>0$) be the multiplicity of $C$ at the points $p'_i$ (resp. at $q'_i$) introduced above. Then we have
$a_i+b_i=d$ for  $i=1,...,6$. The inequality in Proposition \ref{keyprop} (3) can be rewritten as
$$d^2-3d+2 \geq \sum_{i=1}^6( a_i-1)^2+ \sum_{i=1}^6( b_i-1)^2 = \sum_{i=1}^6 (a_i^2+ b_i^2) -2 \sum_{i=1}^6(a_i+b_i)+12=$$
$$= \sum_{i=1}^6(a_i^2+b_i^2)-12d+12.$$
On the other hand it is known that
$$12\sum_{i=1}^6 (a_i^2 +b_i^2) \geq \left( \sum_{i=1}^6 (a_i+b_i)\right)^2=36d^2.$$
with equality if and only if all $a_i, b_j$'s coincide. This leads to the inequality
$$2d^2-9d+10 \leq 0,$$
which is satisfied only for $d=2$. 

A subset of 12 points $P$ out of the 30 points $p_j$, $j=1,30$ of multiplicity 6 is called a {\it star configuration} if the points in $P$ can be divided into 6 pairs
 $\{p'_i, q'_i\}$ such that the 6 lines $L_i$ determined by $p'_i,q'_i$ are in $\A$ and meet all at a point $p\in \A$ of multiplicity 6. The point $p$ is called in this situation the {\it center} of the configuration $P$.

\begin{lemma}
\label{lem:G31bis}

There are 10 conics in $\PP^2$ containing exactly 12 points out of the 30 points $p_j$, $j=1,30$ of multiplicity 6, and the list of the corresponding subsets of indices of the 12 points on a conic is given below. 
$$\{ 1, 2, 5, 6, 7, 10, 11, 14, 20, 21, 24, 25\}, \  \{ 1, 2, 5, 6, 8, 9, 12, 13, 19, 22, 23, 26\},$$
 $$\{1, 3, 4, 6, 7 ,10, 15, 18, 19, 22, 27, 30\}, \ \{1, 3, 4, 6, 8, 9, 16, 17, 20, 21, 28, 29\},$$
 $$\{2, 3, 4, 5, 11, 14, 15, 18, 23, 26, 28, 29\}, \  \{ 2, 3, 4, 5, 12, 13, 16, 17, 24, 25, 27, 30\},$$
 $$\{7, 8, 9, 10, 11, 12, 13, 14, 27, 28, 29, 30\}, \  \{ 7, 8, 9, 10, 15, 16, 17, 18, 23, 24, 25, 26\},$$
$$\{ 11, 12, 13, 14, 15, 16, 17, 18, 19, 20, 21, 22\},$$
 $$\{19, 20, 21, 22, 23, 24, 25, 26, 27, 28, 29, 30\}.$$
Any such subset of 12 points is not a star configuration.
\end{lemma}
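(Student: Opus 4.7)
The plan is to establish both assertions by a direct computational verification in SINGULAR, in the same spirit as the proof of Lemma \ref{lem:G31}. The two claims decouple, so I would treat them separately: first the enumeration of the $10$ conics through $12$ points, and then the non-existence of a star configuration among the resulting $12$-point subsets.

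For the enumeration, I would begin by observing that any conic under consideration must be irreducible: a reducible conic is a union of two lines, and by Lemma \ref{lem:G31} each line in $\PP^2$ contains at most $3$ of the points $p_j$, so a reducible conic contains at most $6$ of them, well short of $12$. To find all irreducible conics through $12$ of the $p_j$, I would iterate over the $\binom{30}{5}=142{,}506$ five-element subsets of $\{p_1,\dots,p_{30}\}$. For each such subset (which is automatically in general position, since no three of the $p_j$ are collinear on a non-$\A$-line and such degeneracies can be tested on the fly) the unique conic through the five points is computed as the one-dimensional kernel of the $5\times 6$ evaluation matrix on the monomial basis $\{x^2, xy, y^2, xz, yz, z^2\}$, working over $\Q(i)$ to accommodate the complex coordinates of $p_{19},\dots,p_{30}$. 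I would then count how many of the remaining $25$ points lie on this conic, keep only those conics reaching exactly $12$, and deduplicate (each $12$-point conic is produced $\binom{12}{5}=792$ times). Comparing the resulting normalized list with the $10$ subsets in the statement confirms the first claim.

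For the star configuration assertion, I would use purely the combinatorial incidence data already produced in Lemma \ref{lem:G31}. For each of the $30$ candidate centers $p\in\{p_1,\dots,p_{30}\}$, the list of $60$ collinear triples identifies the $6$ lines of $\A$ through $p$ and, on each such line, the two multiplicity-$6$ points distinct from $p$. Gathering these $6$ pairs yields a $12$-element \emph{star set} $S(p)\subset\{p_1,\dots,p_{30}\}$. By definition, a $12$-point subset is a star configuration precisely when it equals $S(p)$ for some $p$. Testing $S(p)$ against each of the $10$ listed subsets amounts to $300$ trivial set comparisons; showing that every one of them fails completes the proof.

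The main obstacle is purely computational and remains mild: step one is a single loop over modestly many subsets with a small linear-algebra routine inside, and step two is pure combinatorial bookkeeping that uses nothing beyond the $60$ collinear triples already listed in Lemma \ref{lem:G31}. Both fit comfortably inside the SINGULAR script already invoked in the paper.
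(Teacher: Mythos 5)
Your proposal is correct and, like the paper, ultimately rests on a finite computer verification combined with the incidence data of Lemma \ref{lem:G31}; however, the way you organize the search is genuinely different and worth comparing. The paper tests all $\binom{30}{12}=86{,}493{,}225$ twelve-element subsets for lying on a conic, and then needs a separate small argument (a conic through 13 of the points would produce 13 distinct 12-subsets in the output) to justify the word ``exactly''. You instead generate candidate conics from the $\binom{30}{5}=142{,}506$ five-element subsets and count incidences afterwards; this is several orders of magnitude cheaper and automatically certifies that no conic contains 13 or more of the points. One small correction: the uniqueness of the conic through each $5$-subset is guaranteed by the ``no line contains $4$ of the points'' half of Lemma \ref{lem:G31}, not by the absence of collinear triples --- collinear triples do occur, namely on the $60$ lines of $\A$ --- though your remark that degeneracies can be tested on the fly covers this. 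Your observation that a reducible conic meets the set in at most $6$ points (so the $10$ conics found are irreducible) is essentially what the paper defers to Remark \ref{rk:G31bis}. For the star-configuration claim, your mechanical comparison of the $10$ subsets against the $30$ sets $S(p)$ is equivalent to, and no less rigorous than, the paper's quick contradiction argument (e.g.\ for the first subset the pair $\{1,2\}$ forces center $3$ while $\{1,5\}$ forces center $4$); both use only the list of collinear triples from Lemma \ref{lem:G31}.
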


\proof The proof that the above conics are exactly those containing 12 points is a direct check-up using the same SINGULAR code as in the proof of Lemma \ref{lem:G31}. It involves a check of all the 
$$N={30 \choose 12}=86,493,225$$
 subsets with 12 elements in the set of 30 points of multiplicity 6. Note that a conic cannot contain 13 points of multiplicity 6 in $\A$, since otherwise it would generate 13 subsets by deleting one point at a time. To show that none of the above 10 subsets of points is a star configuration is very easy using Lemma \ref{lem:G31}. For instance, suppose the first set is a star configuration. Because $1,2$ are elements of this set, the center should be $3$. But $1,5$ are also elements, so the center should be $4$, a contradiction. Note that these 10 subsets do not play a symmetric role: for instance the intersection of the first 3 subsets consists of two elements, namely $1,6$, while the intersection of the last 3 subsets is empty. However, any two subsets have exactly 4 elements in common.

\endproof

\begin{remark}
\label{rk:G31bis}
Note that a conic containing 12 points of multiplicity 6 must be irreducible, and hence smooth by
Lemma \ref{lem:G31}. But by Bezout Theorem, such a conic must meet the arrangement in exactly 20 points of multiplicity 6, according to Proposition \ref{keyprop}.
A more rapid way, yielding less complete information, to show the non-existence of such a conic is to show that there is no conic containing 13 out of the 23 points $p_i$, $i=1,...,23$. This approach involves only ${23 \choose 13}= 1,144,066$ subsets to be checked.
\end{remark}

\end{document}